\DeclareMathOperator{\dom}{dom}
\DeclareMathOperator{\rge}{rge}
\DeclareMathOperator{\Ult}{Ult}
\DeclareMathOperator{\crit}{crit}
\newtheorem{question}{Question} 
\newtheorem{theorem}{Theorem} 
\newtheorem{lemma}{Lemma} 
\newtheorem{corollary}[lemma]{Corollary} 
\newtheorem{fact}[lemma]{Fact}
\theoremstyle{remark}
\newtheorem{remark}[lemma]{Remark} 
\title{Singular cardinals and strong extenders}
\author{Arthur W.~Apter}
\address{Department of Mathematics, Baruch College of CUNY, New York NY 10010
  \&  The CUNY Graduate Center, Mathematics, 365 Fifth Avenue, New York NY 10016, USA} 
\email{awapter@alum.mit.edu} 
\urladdr{http://faculty.baruch.cuny.edu/aapter}
\author{James Cummings}
\address{Department of Mathematical Sciences, Carnegie Mellon University,
      Pittsburgh PA 15213, USA}
\email{jcumming@andrew.cmu.edu}
\urladdr{http://www.math.cmu.edu/math/faculty/cummings.html} 
\author{Joel David Hamkins} 
\address{The City University of New York, 
        The College of Staten Island of CUNY, \&
        The CUNY Graduate Center, Mathematics,
        365 Fifth Avenue, New York, NY 10016 USA}
\email{jhamkins@gc.cuny.edu}
\urladdr{http://jdh.hamkins.org/}
\subjclass[2010]{Primary 03E55; Secondary 03E35, 03E45}
\keywords{Strong cardinal, extender, inner model, singular cardinal} 
\thanks{The first author was partially supported by PSC-CUNY grant 64227-00-42. The second author was 
   partially supported by NSF grant DMS-1101156.  The third author was partially supported by
  NSF grant DMS-0800762, PSC-CUNY grant 64732-00-42 and Simons Foundation grant 209252.}
\date{\today}
\begin{document}

\begin{abstract} 
     We investigate the circumstances under which there exist a singular cardinal $\mu$ and
    a short $(\kappa, \mu)$-extender $E$ witnessing ``$\kappa$ is $\mu$-strong'', such that
    $\mu$ is singular in $\Ult(V, E)$. 
\end{abstract}

\maketitle

\section{Introduction} \label{intro}

    In the course of some work on his doctoral dissertation \cite{Cody}, Brent Cody
    encountered some issues which caused him to raise  the following question:
\begin{question} \label{quest}
    If $\kappa$ is $\mu$-strong for some singular cardinal $\mu > \kappa$, is there
    a $(\kappa, \mu)$-extender $E$ which witnesses that $\kappa$ is $\mu$-strong and
    is such that
\[
    \Ult(V, E) \models \mbox{``$\mu$ is singular''?}
\]
\end{question}

   Before describing our results we make a few preliminary remarks. It is easy to see
   that if $\kappa$ is $\mu$-strong for some cardinal $\mu > \kappa$ with $\vert V_\mu \vert = \mu$,
   then there is a $(\kappa, \mu)$-extender witnessing that $\kappa$ is $\mu$-strong. The hypothesis
   that $\vert V_\mu \vert = \mu$ is necessary to obtain such an extender. For any $\mu > \kappa$ with
   $\vert V_\mu \vert = \mu$ and any embedding $j: V \longrightarrow M$ with $\crit(j) = \kappa$,
   $V_\mu \subseteq M$ if and only if $M$ contains all the bounded subsets of $\mu$. 

   Suppose now that $\kappa < \mu$ for some singular $\mu$ with $\vert V_\mu \vert = \mu$, and 
   $E$ is a $(\kappa, \mu)$-extender witnessing that $\kappa$ is $\mu$-strong. We will say that
   $E$ is a {\em good witness}  if $\mu$ is singular in $\Ult(V, E)$, and a {\em bad witness} if
   $\mu$ is regular (and hence inaccessible) in $\Ult(V, E)$.  

   The authors observed that if $\kappa$ is $\nu$-strong for some inaccessible $\nu > \kappa$, then 
   there is a club set  $C \subseteq \nu$ such that  for every singular $\mu \in C$
   there is a bad witness (see Fact \ref{bad} below). Brent Cody recently informed us that this result
   was already known, and had appeared in print in a paper of Friedman and Honzik \cite[Observation 2.8]{FrHon}.
   However this result does not completely settle Question \ref{quest}.

   Our main results are:

\begin{enumerate}

\item  If there is a bad witness, then there is a normal measure on $\kappa$ concentrating
     on $\alpha$ which are strong up to $\alpha^*$, where $\alpha^*$ is the least inaccessible  
     cardinal greater than $\alpha$.

\item  If $\kappa$ is $\nu$-strong for some cardinal $\nu$, then for every singular
     $\mu$ such that $\kappa < \mu < \nu$ and $\vert V_\mu \vert = \mu$  there is a good witness. 

\item  (From suitable large cardinal assumptions) 
\begin{enumerate}
\item 
 It is consistent that $\mu$ is singular, there is exactly one $(\kappa, \mu)$-extender $E$
     witnessing that $\kappa$ is $\mu$-strong, and $E$ is a good witness.
\item 
 It is consistent that $\mu$ is singular, there is exactly one $(\kappa, \mu)$-extender $E$
     witnessing that $\kappa$ is $\mu$-strong, and $E$ is a bad witness.
\end{enumerate} 
\end{enumerate} 

    The last result uses models of the form $L[\vec E]$ where $\vec E$ is a coherent
    sequence of non-overlapping extenders. 

\section{Proofs of the main results}

\subsection{Bad witnesses} 

   We begin with an easy reflection argument, giving a lower bound in consistency strength for the existence of a bad witness.
    
\begin{theorem} If there is a bad witness for ``$\kappa$  is $\mu$-strong'', then
    there is a normal measure on $\kappa$ concentrating on $\alpha$ which are strong
    up to $\alpha^*$, where $\alpha^*$ is the least inaccessible cardinal greater than
    $\alpha$.
\end{theorem}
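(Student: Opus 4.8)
The plan is to extract the desired normal measure directly from the extender embedding and then to verify the reflected property by analyzing what $\Ult(V,E)$ sees internally. Write $j\colon V\to M=\Ult(V,E)$ for the ultrapower map, so that $\crit(j)=\kappa$, $j(\kappa)>\mu$, $V_\mu\subseteq M$, and---because $E$ is a bad witness---$\mu$ is inaccessible in $M$. Let $U=\{X\subseteq\kappa:\kappa\in j(X)\}$ be the normal measure on $\kappa$ derived from $j$, and let $S=\{\alpha<\kappa:\alpha\text{ is strong up to }\alpha^*\}$, where $\alpha^*$ denotes the least inaccessible cardinal above $\alpha$ and ``strong up to $\alpha^*$'' means ``$\lambda$-strong for every $\lambda<\alpha^*$''. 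Since $X\in U$ exactly when $\kappa\in j(X)$, it suffices to show $\kappa\in j(S)$; unwinding the definition of $j(S)$, this amounts to proving that $M\models$``$\kappa$ is strong up to $\delta$'', where $\delta$ is the least inaccessible cardinal of $M$ above $\kappa$. Because $\mu$ is inaccessible in $M$ and $\mu>\kappa$, we have $\kappa<\delta\le\mu$.

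It therefore remains to check that $M\models$``$\kappa$ is $\lambda$-strong'' for every $\lambda<\delta$. Fix such a $\lambda$; since $\delta\le\mu$ we have $\lambda<\mu$. As $\delta$ is inaccessible, hence a strong limit, in $M$, I may choose $\bar\lambda$ with $\lambda\le\bar\lambda<\delta$ and $|V_{\bar\lambda}|^M=\bar\lambda$, and it is enough to produce in $M$ an extender witnessing that $\kappa$ is $\bar\lambda$-strong. The natural candidate is the $(\kappa,\bar\lambda)$-extender $E_{\bar\lambda}=\langle E_a:a\in[\bar\lambda]^{<\omega}\rangle$ derived from $j$, where $E_a=\{X\subseteq[\kappa]^{|a|}:a\in j(X)\}$. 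This object has rank below $\mu$, so $E_{\bar\lambda}\in V_\mu\subseteq M$; and since $V_{\kappa+1}\subseteq M$, the models $V$ and $M$ have the same subsets of $[\kappa]^{<\omega}$, so $M$ correctly recognizes $E_{\bar\lambda}$ as a $(\kappa,\bar\lambda)$-extender.

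The main obstacle is to verify that $E_{\bar\lambda}$ witnesses $\bar\lambda$-strength \emph{as computed inside} $M$, that is, $M\models V_{\bar\lambda}\subseteq\Ult(M,E_{\bar\lambda})$. The delicate point is that $\Ult(M,E_{\bar\lambda})$ is formed using only functions lying in $M$, so a priori it could be smaller than $\Ult(V,E_{\bar\lambda})$. To handle this I would show that every $x\in V_{\bar\lambda}$ is already represented over $M$: there are a seed $a\in[\bar\lambda]^{<\omega}$ and a function $f\in V_\mu\subseteq M$ with domain $[\kappa]^{|a|}$ such that $j(f)(a)=x$ (for instance a set $A\subseteq\kappa$ is captured by $f(\xi)=A\cap\xi$ with seed $\kappa$, and the general case follows by the usual seed analysis, all relevant witnessing functions having rank below $\mu$ and hence lying in $M$). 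Consequently the factor embedding $k\colon\Ult(M,E_{\bar\lambda})\to M$ given by $k([a,f])=j(f)(a)$ has all of $V_{\bar\lambda}$ in its range and fixes $V_{\bar\lambda}$ pointwise, so $V_{\bar\lambda}^{\,\Ult(M,E_{\bar\lambda})}=V_{\bar\lambda}^{M}=V_{\bar\lambda}$. Thus $M\models$``$E_{\bar\lambda}$ witnesses that $\kappa$ is $\bar\lambda$-strong'', and a fortiori $M\models$``$\kappa$ is $\lambda$-strong''.

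Assembling these facts, $M\models$``$\kappa$ is strong up to $\delta$'' where $\delta=(\kappa^*)^M$, so $\kappa\in j(S)$ and hence $S\in U$. This exhibits $U$ as a normal measure on $\kappa$ concentrating on the set of $\alpha$ that are strong up to $\alpha^*$, as required. I expect essentially all the work to lie in the internal verification of the previous paragraph, namely that the low-rank derived extenders, which land in $M$ because $V_\mu\subseteq M$, are still seen by $M$ to witness strength; the reflection step and the extraction of $U$ are routine.
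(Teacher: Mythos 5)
Your proposal is correct and takes essentially the same route as the paper's proof: derive the normal measure $U$ from $j_E$, observe that the least $M$-inaccessible above $\kappa$ is at most $\mu$, and show $M=\Ult(V,E)\models$ ``$\kappa$ is strong up to $\mu$'' so that $\kappa\in j_E(S)$. The only difference is one of detail: the paper treats the internal verification as immediate (its witnessing extenders are the $V$-extenders for $\nu$-strength, $\nu<\mu$, which lie in $V_\mu\subseteq M$), whereas you rederive the extenders from $j$ itself and explicitly check, via the factor map and low-rank representing functions, that $M$ agrees they witness strength---a correct elaboration of the same idea.
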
 

\begin{proof} Let $E$ be a bad witness, so that $\mu$ is inaccessible in
    $\Ult(V, E)$. Since $V_\mu \subseteq \Ult(V, E)$, $\Ult(V, E)$ contains 
    extenders witnessing that $\kappa$ is $\nu$-strong for every $\nu < \mu$,
    so
\[
    \Ult(V, E) \models \mbox{``$\mu$ is inaccessible and $\kappa$ is strong up to $\mu$''.}
\]
   If $U$ is the normal measure derived from $j_E$ then $U$ concentrates on the
   set of $\alpha$ which are strong up to $\alpha^*$. 
\end{proof} 

   The following fact, giving an upper bound for the existence of a bad witness,
   was observed by Friedman and Honzik \cite[Observation 2.8]{FrHon}. We give the proof
   because we need the idea later in Theorem \ref{badlewit}.

\begin{fact}  \label{bad}  If $\kappa$ is $\nu$-strong for some inaccessible $\nu > \kappa$, then 
    there is a club set  $C \subseteq \nu$ such that  for every singular $\mu \in C$
     there is a bad witness. 
\end{fact}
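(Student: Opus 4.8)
The plan is to fix once and for all an embedding $j : V \to M$ witnessing that $\kappa$ is $\nu$-strong, so $\crit(j) = \kappa$, $V_\nu \subseteq M$, and $j(\kappa) > \nu$, and to read off the bad witnesses as the short extenders of length $\mu$ derived from $j$, for $\mu$ ranging over a suitable club $C \subseteq \nu$. For $\mu < \nu$ let $E_\mu$ be the $(\kappa,\mu)$-extender derived from $j$ (support $[\mu]^{<\omega}$), put $N_\mu = \Ult(V, E_\mu)$, and let $k_\mu : N_\mu \to M$ be the canonical factor embedding, so that $k_\mu \circ j_{E_\mu} = j$ and $k_\mu \restriction \mu = \mathrm{id}$. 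Writing $H^M(\mu) = \{\, j(f)(a) : a \in [\mu]^{<\omega},\ f \in V \,\} = \rge(k_\mu)$, the transitive-collapse description of $k_\mu$ yields the key identity $k_\mu(\mu) = \min(H^M(\mu) \setminus \mu)$. Since $k_\mu$ is elementary, $\mu$ is inaccessible in $N_\mu$ if and only if $k_\mu(\mu)$ is inaccessible in $M$, so everything reduces to arranging that this least hull-ordinal above $\mu$ is inaccessible in $M$.

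First I would identify a single inaccessible target common to all the $\mu$ I care about. Let $N = \Ult(V, E_\nu)$ be the ultrapower by the full length-$\nu$ extender and $k : N \to M$ its factor map, and set $\rho = k(\nu) = \min(H^M(\nu) \setminus \nu)$. The point is that $\rho$ is inaccessible in $M$: as $N \subseteq V$ is an inner model and $\nu$ is inaccessible in $V$, regularity and the strong-limit property pass downward, so $\nu$ is inaccessible in $N$, and elementarity of $k$ then makes $\rho = k(\nu)$ inaccessible in $M$. I also fix $a_0 \in [\nu]^{<\omega}$ and $f_0 \in V$ with $\rho = j(f_0)(a_0)$, and set $\mu_0 = \max(a_0) + 1 < \nu$.

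Next I would form the club and carry out the closure computation, which is where the real work lies. Let $C$ be the set of $\mu$ with $\mu_0 \le \mu < \nu$, $\vert V_\mu \vert = \mu$, and $H^M(\mu) \cap \nu = \mu$, i.e. the closure points of the hull below $\nu$. This $C$ is club: the beth-fixed points are club below the inaccessible $\nu$, and one closes off under the map $\mu \mapsto \sup(H^M(\mu) \cap \nu)$, which stays below $\nu$ because $\vert H^M(\mu) \cap \nu \vert \le \max(\mu, 2^\kappa) < \nu$. The crux is to verify, for $\mu \in C$, that $k_\mu(\mu) = \rho$. Indeed $\rho \in H^M(\mu)$ since $a_0 \subseteq \mu_0 \le \mu$; there are no hull ordinals in the interval $[\mu, \nu)$ because $\mu$ is a closure point; and there are none in $[\nu, \rho)$ because $H^M(\mu) \subseteq H^M(\nu)$ while $\rho$ is by definition the least element of $H^M(\nu)$ that is $\ge \nu$. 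Hence $\rho = \min(H^M(\mu) \setminus \mu) = k_\mu(\mu)$, and therefore $\mu$ is inaccessible in $N_\mu$.

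Finally I would conclude: for $\mu \in C$ we have $\vert V_\mu \vert = \mu$ and $\kappa$ is $\mu$-strong (as $\mu < \nu$), so $V_\mu \subseteq N_\mu$ and $E_\mu$ genuinely witnesses that $\kappa$ is $\mu$-strong; when $\mu$ is moreover singular, the inaccessibility of $\mu$ in $N_\mu$ exhibits $E_\mu$ as a bad witness, as required. I expect the main obstacle to be the factor-map and closure computation of the third paragraph — in particular isolating the single inaccessible target $\rho$ shared by all closure points and checking that no hull ordinals intrude in $[\mu, \rho)$ — whereas the clubness of $C$ and the downward absoluteness giving inaccessibility of $\rho$ should be routine.
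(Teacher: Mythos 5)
Your proof is correct and follows essentially the same route as the paper's: factor the witnessing embedding through the length-$\mu$ derived extenders, take the club of hull closure points (intersected with the beth fixed points), and pull inaccessibility of a single fixed target ordinal back through the elementary factor map $k_\mu$. The only real difference is cosmetic: the paper starts from a $(\kappa,\nu)$-extender ultrapower, so that $\nu$ itself lies in $\rge(k_\mu)$ for all large $\mu$ and plays the role of your $\rho$, whereas your extra step locating $\rho = k(\nu)$ via $\Ult(V, E_\nu)$ and downward absoluteness merely accommodates starting from an arbitrary witnessing embedding.
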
 

\begin{proof}   Let $E$ be a  $(\kappa, \nu)$-extender witnessing that $\kappa$ is $\nu$-strong, and
   let $j_E: V \longrightarrow M_E = \Ult(V, E)$ be the corresponding ultrapower map.
   We note that since $\nu$ is inaccessible in $V$, $\nu$ is inaccessible in $\Ult(V, E)$.

 Recall that
\[
     M_E = \{ j_E(f)(a) : a \in [\nu]^{<\omega}, \dom(f) = [\kappa]^{<\omega} \}. 
\]
    For any $\mu < \nu$ we may form the $(\kappa, \mu)$-extender $E \restriction \mu$
    and the corresponding ultrapower map  $j_{E \restriction \mu} : V \longrightarrow M_{E \restriction \mu}$.
    It is easy to see that there is an elementary embedding $k_\mu$ from  $M_{E \restriction \mu}$ to
    $M_E$  such that $k_\mu \circ j_{E \restriction \mu} = j_E$. This embedding is given by 
    the formula $k_\mu: j_{E \restriction \mu} (f)(a) \mapsto j_E (f)(a)$ for $a \in [\mu]^{<\omega}$.
    Note that $M_E = \bigcup_{\mu < \nu} \rge(k_\mu)$, so that in particular
    $\nu \in \rge(k_\mu)$ for all large enough $\mu < \nu$.

    We now define a function $F$ with domain $\nu$, where 
\[
    F(\mu) = \max \{ \vert V_\mu \vert, \sup(\rge(k_\mu) \cap \nu) \}.
\]  
    Since $\nu$ is inaccessible and
\[
    \rge(k_\mu) \cap \nu  \subseteq \{ j_E(f)(a) : a \in [\mu]^{<\omega}, f:[\kappa]^{<\omega} \longrightarrow \kappa \},
\]
    $\rge(F) \subseteq \nu$. If $\mu$ is a closure point of $F$ with $\nu \in \rge(k_\mu)$, then  
    $\vert V_\mu \vert = \mu$ and 
\[
    \nu \cap \rge(k_\mu) = \mu. 
\]
    Let $C$ be the club set of closure points $\mu$ of $F$ such that $\nu \in \rge(k_\mu)$. 

    For each $\mu \in C$, we have that $k_\mu(\mu) = \nu$, so that
    by elementarity $\mu$ is inaccessible in $M_{E \restriction \mu} = \Ult(V, E \restriction \mu)$.
    It follows easily that for every singular $\mu \in C$ the extender $E \restriction \mu$ is a 
    bad witness.    
\end{proof}     

\subsection{Good witnesses} 

\begin{theorem} \label{goodwit} 
 If $\kappa$ is $\nu$-strong for some cardinal $\nu$, then for every singular
     $\mu$ such that $\kappa < \mu < \nu$ and $\vert V_\mu \vert = \mu$  there is a good witness. 
\end{theorem}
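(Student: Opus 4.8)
The plan is to build a $(\kappa,\mu)$-extender $F$ that witnesses $\mu$-strongness and has $\mu$ singular in $\Ult(V,F)$. Write $\lambda=\mathrm{cf}^V(\mu)<\mu$ and fix a continuous increasing cofinal sequence $\langle\mu_i:i<\lambda\rangle$ in $\mu$. Fix a $(\kappa,\nu)$-extender $E$ witnessing $\nu$-strongness, with $j_E\colon V\to M_E$ and $V_\nu\subseteq M_E$. First I would record the behaviour in the big model: since $\mu<\nu$ we have $V_\mu\subseteq M_E$, and a cofinal map $c\colon\lambda\to\mu$ lies in $V_{\mu+1}\subseteq V_\nu\subseteq M_E$; as $M_E\subseteq V$ also gives $\mathrm{cf}^{M_E}(\mu)\ge\mathrm{cf}^V(\mu)$, the ordinal $\mu$ is singular in $M_E$ with $\mathrm{cf}^{M_E}(\mu)=\lambda$. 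So singularity is available in $M_E$; the whole problem is to transport it to an ultrapower of length exactly $\mu$.

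Next I would reformulate the goal. For any $(\kappa,\mu)$-extender $F$ one has $\Ult(V,F)=\{\,j_F(g)(a):a\in[\mu]^{<\omega},\ \dom(g)=[\kappa]^{<\omega}\,\}$, so $\mu$ is singular in $\Ult(V,F)$ precisely when $\mu=j_F(h)(a)$ for some finite $a\subseteq\mu$ and some $h$ with $\mathrm{cf}(h(\xi))<h(\xi)$ on an $F$-large set; equivalently, some finite seed $a\subseteq\mu$ has an $a$-hull $\{\,j_F(g)(a):g\in V\,\}$ that is cofinal in $\mu$. This explains why the obvious candidate $F=E\restriction\mu$ is not enough: exactly as in the proof of Fact \ref{bad}, the factor $k_\mu\colon\Ult(V,E\restriction\mu)\to M_E$ can have $\crit(k_\mu)=\mu$ and $k_\mu(\mu)$ inaccessible in $M_E$, so that $\mu$ is regular in $\Ult(V,E\restriction\mu)$ and one gets a bad witness. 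Since every $(\kappa,\mu)$-extender \emph{derived from} $j_E$ is $E\restriction\mu$, the cofinal map $c$ — whose natural support in $j_E$ can lie above $\mu$ — cannot simply be pulled into the length-$\mu$ ultrapower, and the embedding itself must be altered.

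The construction should exploit that $\mu$ is singular \emph{in $V$}. The values $c(i)=\mu_i$ are all below $\mu$, hence are fixed by any map with critical point $\kappa$ and are present in every $\mu$-strong target; what is missing from $\Ult(V,E\restriction\mu)$ for bad $\mu$ is only a finite-seed \emph{code} for the sequence. I would therefore construct $F$ directly as a coherent system $\langle F_a:a\in[\mu]^{<\omega}\rangle$ of $\kappa$-complete ultrafilters agreeing with $E\restriction\mu$ on most seeds, but with the ultrafilter at one additional seed $a_0\subseteq\mu$ arranged (using $c\in V$ and a bijection witnessing $\vert V_\mu\vert=\mu$) so that the $a_0$-hull meets each $\mu_i$ cofinally. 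With $\mu=\sup_i j_F(g_i)(a_0)$ reached along a sequence of length $\lambda$, one gets $\mathrm{cf}^{\Ult(V,F)}(\mu)=\lambda<\mu$. Strongness should be retained because the seeds below $\mu$ still capture every $P(\beta)$ for $\beta<\mu$ via the seed $\beta$, so that $V_\mu\subseteq\Ult(V,F)$ by the criterion of the introductory remark together with $\vert V_\mu\vert=\mu$.

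The main obstacle — and the real content — is showing that such an $F$ is a genuine extender, i.e.\ that this modified system of ultrafilters arises from an elementary $j_F\colon V\to\Ult(V,F)$ with $\crit(j_F)=\kappa$ whose range is generated by $[\mu]^{<\omega}$, while the length stays equal to $\mu$ (so that $F$ is a $(\kappa,\mu)$-extender and not a $(\kappa,\gamma)$-extender with $\gamma>\mu$). This is exactly where the good/bad dichotomy is decided: one must verify that forcing $\mu$ into $\rge(j_F)$ from below, with singular fibres, does not push any generator up to or past $\mu$, and here the hypotheses that $\mu$ is singular and $\vert V_\mu\vert=\mu$ are essential. Once the embedding is in hand, the cofinality computation $\mathrm{cf}^{\Ult(V,F)}(\mu)=\lambda$ and the inclusion $V_\mu\subseteq\Ult(V,F)$ are routine, and $F$ is the desired good witness.
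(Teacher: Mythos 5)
Your reduction of the problem is accurate --- $\mu$ is singular in $M_E$, the only $(\kappa,\mu)$-extender derived from $j_E$ is $E\restriction\mu$, and it is a good witness exactly when the factor map $k_\mu$ fixes $\mu$ rather than having critical point $\mu$ --- but the proposal stops precisely where a proof would have to begin. You never actually construct the modified system $F$: coherence ties every ultrafilter $F_a$ to every other under projections, so one cannot ``agree with $E\restriction\mu$ on most seeds'' while replacing the ultrafilter at a single new seed $a_0$; any such change propagates through the whole system, and you give no argument that the result is coherent, yields a well-founded ultrapower, has length exactly $\mu$, or arises from an elementary $j_F$ at all. In practice one obtains extenders by deriving them from an elementary embedding, and you have no embedding to derive from other than $j_E$ --- indeed you observe yourself that every length-$\mu$ extender derived from $j_E$ is $E\restriction\mu$. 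Conceding that ``the main obstacle --- and the real content'' is exactly this verification means the proposal is a plan, not a proof.

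The gap is not merely technical: the point you are missing is that no new extender is needed. The paper's trick is to run the argument on a \emph{minimal counterexample}. Suppose $\mu$ is the least singular cardinal in $(\kappa,\nu)$ with $\vert V_\mu \vert = \mu$ and no good witness. Since $V_\nu \subseteq M_E$, the model $M_E$ computes good-witness-hood for cardinals below $\nu$ correctly, so $M_E \models$ ``$\mu$ is the least cardinal with no good witness''. Thus $\mu$ is definable from $\kappa$ in $M_E$, hence $\mu \in \rge(k_\mu)$, which forces $k_\mu(\mu) = \mu$ (as $k_\mu$ is the identity below $\mu$, the preimage of $\mu$ can only be $\mu$ itself). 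Elementarity of $k_\mu$ then pulls ``$\mu$ is singular'' from $M_E$ down to $M_{E\restriction\mu} = \Ult(V, E\restriction\mu)$, i.e.\ $E\restriction\mu$ is a good witness for $\mu$ after all --- contradicting the choice of $\mu$. So the scenario you were trying to engineer around (a bad $E\restriction\mu$) simply cannot occur at the least counterexample; it is ruled out by definability, not by building a new extender.
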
 

\begin{proof} 
   Fix an extender $E$ witnessing that $\kappa$ is $\nu$-strong. We prove the claim by induction
   on $\mu$. Suppose that $\mu$ is a minimal counterexample. As above we may form
   $j_E: V \longrightarrow M_E$ and factor it through the ultrapower by $E \restriction \mu$,
   obtaining an embedding $k_\mu$ from $M_{E \restriction \mu}$ to $M_E$ such that
   $k_\mu \circ j_{E \restriction \mu} = j_E$. 

   As usual $\mu \subseteq \rge(k_\mu)$, and we claim that in this case also
   $\mu \in \rge(k_\mu)$. To see this we observe that $V_\nu \subseteq M_E$, 
   so that by a routine calculation 
\[
   M_E \models \mbox{``$\mu$ is the least cardinal with no good witness''}.
\]
   Hence $\mu$ is definable from $\kappa$ in $M_E$, which implies that
   $\mu \in \rge(k_\mu)$. 

   It follows that $k_\mu(\mu) = \mu$. Since $V_\nu \subseteq M_E$ we see that
   $\mu$ is singular in $M_E$, and hence by elementarity $\mu$ is singular
   in $M_{E \restriction \mu}  = \Ult(V, E \restriction \mu)$. 
   So $E \restriction \mu$ is a good witness for $\mu$, contradicting the choice
   of $\mu$ as the least counterexample. 

\end{proof} 

\subsection{Unique witnesses} 

    To prove the remaining results we need some analysis of extenders in models
    of the form $L[\vec E]$ where $\vec E$ is a  coherent sequence
    of non-overlapping extenders. We refer the reader to Mitchell's excellent survey paper \cite{Mitchell1}
    for a detailed account of these models; we adopt the terminology and conventions of that paper, in particular
    we note that  $E(\kappa, \beta)$ is a  total $(\kappa, \kappa + 1 + \beta)$-extender.

    The key fact is the {\em Comparison Lemma} \cite[Lemma 3.15]{Mitchell1}, which
    states that (under the right hypotheses) two models with extender sequences can be iterated so that 
    the images of the original extender sequences are ``lined up''. In our proofs we will freely use the Comparison Lemma
    and some immediate consequences.   
    In all comparison iterations which appear below,
    the extender sequences are coherent, so that the critical points are strictly increasing. 
   
 We will also use the following fairly standard facts about models of the form $L[\vec E]$,
    all of which are due to Mitchell  \cite{Mitchell2,Mitchell3}. For the convenience of the reader,
   we have included references or sketchy proofs. Some of the arguments would be slicker with 
   an appeal to the theory of core models, but we have chosen to avoid this.  
  Let $V = L[\vec E]$ where $\vec E$ is a coherent sequence   of non-overlapping extenders. Then:

\begin{fact} \label{GCH}
    GCH holds.
\end{fact}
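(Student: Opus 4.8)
The plan is to establish the instance $2^\kappa = \kappa^+$ for each infinite cardinal $\kappa$ by the standard condensation argument for fine-structural inner models, following Mitchell's survey. Write $L[\vec E] = \bigcup_\alpha \mathcal{M}_\alpha$, where $\mathcal{M}_\alpha = (J^{\vec E}_\alpha, \in, \vec E \restriction \alpha)$ denotes the $\alpha$th level of the Jensen-style hierarchy relative to $\vec E$. First I would record, by induction on $\alpha$ using the acceptability of the sequence, that $\vert J^{\vec E}_\alpha \vert = \vert \alpha \vert$ for every infinite $\alpha$; in particular every level below $\kappa^+$ has cardinality at most $\kappa$.

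The heart of the matter is to show that every subset of $\kappa$ lying in $L[\vec E]$ already appears in some level $\mathcal{M}_{\bar\gamma}$ with $\bar\gamma < \kappa^+$. Given $A \subseteq \kappa$, fix $\gamma$ large enough that $A \in J^{\vec E}_\gamma$ and form an elementary substructure $X \prec \mathcal{M}_\gamma$ with $\kappa \cup \{A\} \subseteq X$ and $\vert X \vert = \kappa$. Let $\pi : X \to \bar{\mathcal{M}}$ be the transitive collapse. Since $\kappa + 1 \subseteq X$, the map $\pi$ fixes every ordinal up to $\kappa$ and fixes $A$, so $A \in \bar{\mathcal{M}}$; and since $\vert X \vert = \kappa$, the ordinal height $\bar\gamma$ of $\bar{\mathcal{M}}$ satisfies $\bar\gamma < \kappa^+$.

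The key step, and the one I expect to be the main obstacle, is the \emph{condensation} property: one must verify that the transitive collapse $\bar{\mathcal{M}}$ is itself an initial segment of the very same hierarchy, that is, $\bar{\mathcal{M}} = \mathcal{M}_{\bar\gamma}$ with collapsed extender sequence equal to $\vec E \restriction \bar\gamma$. This is where the non-overlapping, coherent, fine-structural nature of $\vec E$ is essential: the collapse of each extender segment appearing on $X$ must itself lie on $\vec E$, which is precisely the content of the condensation theorem for these models (and the point at which soundness and the initial-segment condition on extenders enter). Rather than develop the fine structure from scratch I would cite Mitchell's survey \cite{Mitchell1} for this fact, possibly after passing to a suitable $\Sigma_n$-hull to secure the requisite degree of elementarity.

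Granting condensation, we conclude $A \in J^{\vec E}_{\bar\gamma}$ with $\bar\gamma < \kappa^+$, whence $\mathcal{P}(\kappa) \subseteq \bigcup_{\bar\gamma < \kappa^+} J^{\vec E}_{\bar\gamma}$. As each such level has size at most $\kappa$, this union has size at most $\kappa^+$, giving $2^\kappa \le \kappa^+$ and hence $2^\kappa = \kappa^+$. Since $\kappa$ was arbitrary, GCH holds in $L[\vec E]$.
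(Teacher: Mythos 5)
You have correctly isolated where the difficulty lies, but the step you call ``condensation'' is false for extender models, and this is precisely why GCH in $L[\vec E]$ cannot be proved the way it is proved in $L$. Concretely: suppose $\vec E$ is nonempty (otherwise $L[\vec E]=L$ and there is nothing new to do) and take $\kappa=\omega$. Any hull $X \prec \mathcal{M}_\gamma$ of the kind you form, with $\gamma$ past the first extender of the sequence, contains that extender by elementarity, so the transitive collapse $\bar{\mathcal{M}}$ is a structure whose extender predicate is \emph{nonempty}, carrying an ``extender'' whose critical point and index are countable ordinals. But every genuine level $\mathcal{M}_{\bar\gamma}$ with $\bar\gamma<\omega_1$ has \emph{empty} predicate, since every extender on $\vec E$ has critical point a measurable cardinal of $V$ (hence uncountable) and is indexed above its critical point on any of the standard conventions. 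So $\bar{\mathcal{M}}\neq\mathcal{M}_{\bar\gamma}$, and passing to $\Sigma_n$-hulls cannot repair this: more elementarity only makes it more certain that the hull catches extenders. The condensation theorems that do hold for fine-structural extender models (Mitchell--Steel) carry soundness hypotheses and have a disjunctive conclusion (the collapse is a level of the model \emph{or} of an ultrapower of one), and Mitchell's survey \cite{Mitchell1}, which deliberately avoids fine structure, contains no condensation theorem of the form you want to cite; so the step you outsourced is not available in the cited reference either.

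For comparison, the paper disposes of this Fact by pure citation of \cite[Theorem 3.24]{Mitchell1}, and the argument behind that theorem --- the same template the paper displays in its own proof of Fact \ref{jensen} --- replaces condensation by \emph{comparison}. After collapsing the hull to some $L_{\bar\theta}[\vec F]$, one shows (via Fact \ref{3.19}) only that $\vec F$ and $\vec E$ agree at and below the relevant critical point; one then compares the collapsed model with $V$ or with a suitable ultrapower of $V$, using the Comparison Lemma, the non-overlapping hypothesis to keep all critical points of the comparison above the cardinal in question, and the fact that a set model cannot out-iterate $V$ (else $V$ would have a set of indiscernibles). Subsets of $\kappa$ are then traced through the iteration maps, and the cardinality bound comes from a counting/uniqueness argument over the possible collapsed models, not from membership of $A$ in a low level of the original hierarchy. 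Your hull-and-collapse set-up is the right first move, but the heart of the proof is exactly the step you deferred, and as you stated it, that step is false.
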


\begin{proof} 
      This is immediate from \cite[Theorem 3.24]{Mitchell1}. 
\end{proof}

\begin{fact} \label{3.19} 
    If there is a $(\kappa, \kappa +1 + \beta)$-extender $E$ such that $o^{j_E(\vec E)}(\kappa) = \beta$, then
   $\beta < o^{\vec E}(\kappa)$ and $E = E(\kappa, \beta)$. In particular this will hold whenever
 $j_E(\vec E) \restriction (\kappa, \beta)   = \vec E \restriction (\kappa, \beta)$. 
\end{fact}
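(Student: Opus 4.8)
The plan is to prove Fact \ref{3.19} by appealing to the Comparison Lemma to locate the extender $E$ on the sequence $\vec E$. Let me sketch the main statement: we have a $(\kappa, \kappa+1+\beta)$-extender $E$ with $o^{j_E(\vec E)}(\kappa) = \beta$, and we want to conclude $\beta < o^{\vec E}(\kappa)$ and $E = E(\kappa,\beta)$.

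\medskip

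First I would set up the comparison. Form the ultrapower $j_E : V = L[\vec E] \longrightarrow M = \Ult(V,E) = L[j_E(\vec E)]$. Since both $V$ and $M$ are iterable models with coherent non-overlapping extender sequences, I would compare them, obtaining iterations producing a common final model $P = L[\vec F]$ with iteration maps $i : V \longrightarrow P$ and $i' : M \longrightarrow P$ whose extender sequences are lined up. The key structural point I want to exploit is that $\crit(E) = \kappa$ and, by coherence, $j_E(\vec E) \restriction (\kappa+1) = \vec E \restriction (\kappa+1)$, so $V$ and $M$ agree on the sequence up to and including the extenders with critical point $\kappa$ below order $o^{\vec E}(\kappa)$; more precisely $V_\kappa^V = V_\kappa^M$ and the portions of the two sequences that can be applied at critical point $\kappa$ are determined by $o^{\vec E}(\kappa)$ on the $V$ side and $o^{j_E(\vec E)}(\kappa) = \beta$ on the $M$ side.

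\medskip

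The heart of the argument is to show that in the comparison neither side moves below $\kappa$ and that the comparison forces $E$ to coincide with the extender $E(\kappa,\beta)$ sitting on $\vec E$. I would argue that because $M$ is an ultrapower of $V$ by $E$, the embedding $i' \circ j_E = i$, and since $\crit(i) \geq \kappa$ (as both iterations use extenders with critical points that are strictly increasing and the first disagreement is at $\kappa$), the agreement between $V$ and $M$ up to $\kappa$ together with the hypothesis $o^{j_E(\vec E)}(\kappa) = \beta$ pins down the length of the $\vec E$-sequence at $\kappa$: one shows $\beta < o^{\vec E}(\kappa)$ since $V$ must have an extender on its sequence at $\kappa$ of order $\beta$ to account for the move $j_E$ makes. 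The standard uniqueness half of the comparison (two extenders with the same critical point that induce the same ultrapower, in comparable iterable models, must be the same extender on the sequence) then yields $E = E(\kappa,\beta)$.

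\medskip

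The main obstacle I expect is the precise bookkeeping in the comparison showing that $E$ is not merely an extender with the right critical point and length but is literally $E(\kappa,\beta)$ from the coherent sequence. This requires the Dodd–Jensen / Mitchell-style uniqueness argument: one compares $\Ult(V,E)$ with $\Ult(V, E(\kappa,\beta))$ and shows the comparison is trivial on both sides, forcing the two extenders to coincide. The ``in particular'' clause should then be essentially immediate: if $j_E(\vec E) \restriction (\kappa,\beta) = \vec E \restriction (\kappa,\beta)$, then the two sequences agree on all extenders with critical point $\kappa$ and order below $\beta$, so $o^{j_E(\vec E)}(\kappa) = \beta$ holds automatically (using coherence to see that no extender of order exactly $\beta$ survives on the $M$ side, since $E$ itself has been applied), and the main statement applies. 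The delicate point there is verifying that the agreement of the sequences below order $\beta$ genuinely forces $o^{j_E(\vec E)}(\kappa)$ to equal exactly $\beta$ rather than something larger, which should follow from the coherence property $j_E(\vec E) \restriction (\kappa, o^{\vec E}(\kappa)) = \vec E \restriction \kappa$-type identities built into the definition of $E(\kappa,\beta)$.
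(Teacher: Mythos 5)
The paper's own ``proof'' of Fact~\ref{3.19} is a one-line citation of \cite[Lemma 3.19]{Mitchell1}, so there is no detailed argument to match; you are in effect trying to reprove Mitchell's lemma. Your overall strategy --- compare $V=L[\vec E]$ with $M=\Ult(V,E)=L[j_E(\vec E)]$ and exploit the commuting identity $i_1\circ j_E=i_0$ of Fact~\ref{3.18} --- is the right one, and it is exactly the machinery the paper itself runs in detail in the proof of Lemma~\ref{mainlemma}(\ref{mainbit}). But at every point where real work is required, your sketch asserts rather than argues, and two of the appeals are circular. You invoke ``coherence'' for the arbitrary extender $E$: first in ``$j_E(\vec E)\restriction(\kappa+1)=\vec E\restriction(\kappa+1)$ by coherence,'' and again when justifying the ``in particular'' clause (``no extender of order exactly $\beta$ survives \dots since $E$ itself has been applied''). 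Coherence is a property only of the extenders that lie \emph{on} $\vec E$; whether $E$ lies on $\vec E$ is precisely the conclusion, so these steps assume what is to be proved. All that comes free from $\crit(j_E)=\kappa$ is agreement of $\vec E$ and $j_E(\vec E)$ at critical points strictly below $\kappa$.

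The two conclusions are likewise asserted, not derived. For $\beta<o^{\vec E}(\kappa)$, ``$V$ must have an extender on its sequence at $\kappa$ \dots to account for the move $j_E$ makes'' is a restatement of the claim; the actual argument is that $i_0(\kappa)=i_1(j_E(\kappa))\ge j_E(\kappa)>\kappa$ forces $\crit(i_0)=\kappa$, so the first extender used on the $V$-side is some $E(\kappa,\bar\beta)$ on $\vec E$, and a further least-disagreement/termination analysis (which your sketch does not supply) is needed to see that the $M$-side never moves at critical point $\kappa$ and that $\bar\beta=\beta$. For $E=E(\kappa,\beta)$, the ``standard uniqueness half of the comparison'' you cite --- two extenders inducing the same ultrapower must be the same extender on the sequence --- is essentially Fact~\ref{3.19} itself, and in any case you have not yet shown that $E$ and $E(\kappa,\beta)$ induce the same ultrapower. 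What actually closes this gap is the generator computation the paper performs in Lemma~\ref{mainlemma}: non-overlapping implies that $\crit(i_1)$ and all critical points of $i_0$ past its first step exceed the generators of $E$, whence for $a\in[\kappa+1+\beta]^{<\omega}$ and $X\subseteq[\kappa]^{\vert a\vert}$ one has $a\in j_E(X)\iff a\in i_0(X)\iff a\in j_{E(\kappa,\beta)}(X)$, i.e.\ $E=E(\kappa,\beta)$. As it stands, your proposal is a correct plan whose decisive steps are missing; modelling them on the proof of Lemma~\ref{mainlemma}(\ref{mainbit}) would repair it.
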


\begin{proof}  This is immediate from  \cite[Lemma 3.19]{Mitchell1}. 
\end{proof} 

\begin{fact} \label{3.18} Let $E$ be an extender and $j_E: V \longrightarrow M_E = \Ult(V, E)$ the corresponding
   ultrapower map.  Then comparing $V$ and $M_E$ leads to iterations $i_0: V \longrightarrow N$ and $i_1: M_E \longrightarrow N$
   with a common target model $N$, and $i_1 \circ j_E = i_0$. 
\end{fact}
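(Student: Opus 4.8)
The plan is to run the coiteration of $V$ and $M_E$ given by the Comparison Lemma and then verify that the two iteration maps fit together with $j_E$. First I would observe that $M_E = \Ult(V,E)$ is again a model of the required form: since $M_E \models V = L[j_E(\vec E)]$, we have $M_E = L[\vec F]$ with $\vec F = j_E(\vec E)$, and as $j_E$ is elementary, $\vec F$ is again a coherent sequence of non-overlapping extenders and $M_E$ inherits iterability from $V$. Thus $V$ and $M_E$ both satisfy the hypotheses of the Comparison Lemma, and coiterating them produces iterations $i_0 \colon V \longrightarrow N_0$ and $i_1 \colon M_E \longrightarrow N_1$ lining up the images of the two extender sequences. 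Because $V$ and $M_E$ are full proper-class models $L[\cdot]$, neither side can drop or end up a proper initial segment of the other; hence $N_0 = N_1 =: N$ and both $i_0$ and $i_1$ are total iteration maps, which gives the common target model.

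The heart of the matter is the commutativity $i_1 \circ j_E = i_0$. Here I would first record that, since all the sequences involved are non-overlapping and coherent, every iteration that arises is linear with strictly increasing critical points, so the whole coiteration is governed by the classical theory of linear iterated ultrapowers rather than the tree formalism. The composite $i_1 \circ j_E \colon V \longrightarrow N$ is then an elementary embedding of $V$ into the common linear iterate $N$, and the task reduces to showing that this embedding coincides with the coiteration map $i_0$ on the ordinals (whence everywhere, by the usual hull argument).

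To conclude I would invoke the minimality and uniqueness of linear iteration maps, that is, the Dodd--Jensen property in the linear setting. Since $i_0$ is the coiteration map it is pointwise least among elementary embeddings of $V$ into $N$, giving $i_0(\xi) \le (i_1 \circ j_E)(\xi)$ for all ordinals $\xi$; the reverse inequality is obtained by applying the same minimality after checking that $i_1 \circ j_E$ is itself realized as a linear iteration of $V$ into $N$, using that $\crit(j_E) = \kappa$ lies at or below the least disagreement driving the coiteration and that the extenders applied along $i_1$ pull back under $j_E$. The step I expect to be the main obstacle is exactly this commutativity, and within it the reverse inequality: the bookkeeping that the $M_E$-side extenders, composed with $j_E$, extend the $V$-side iteration so that the two embeddings are both minimal is where the real care is needed, whereas the iterability of $M_E$ and the identification $N_0 = N_1$ are routine once the Comparison Lemma is in hand.
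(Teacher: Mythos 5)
Your setup is fine as far as it goes: $M_E = L[\vec F]$ with $\vec F = j_E(\vec E)$ inherits coherence, non-overlapping and iterability, the Comparison Lemma applies, and for two iterable proper-class models the coiteration ends with a single common model $N$. The Dodd--Jensen inequality $i_0(\xi) \le i_1(j_E(\xi))$ is also a legitimate (if heavy) tool. The genuine gap is the reverse inequality, exactly where you place it, and it is not a matter of bookkeeping: your plan is to ``check that $i_1 \circ j_E$ is itself realized as a linear iteration of $V$ into $N$,'' but that assertion is essentially the theorem being proved, not something verifiable in advance. The extender $E$ is arbitrary; it need not lie on $\vec E$, and a priori $j_E$ need not agree with any iteration map of $V$ --- indeed, that $j_E$ coincides with the ultrapower by an extender on the sequence is precisely what is later \emph{deduced} from this Fact (see Lemma \ref{mainlemma} and the Remark following it). Your proposed justification does not repair this: the extenders applied along $i_1$ are drawn from images of $\vec F$ on iterates of $M_E$, and there is no ``pull back under $j_E$'' operation that converts such an iteration into an iteration of $V$; likewise, locating $\crit(j_E)$ relative to the least disagreement requires comparing $\vec F(\kappa,-)$ with $\vec E(\kappa,-)$ via Fact \ref{3.19}, an analysis the paper carries out only later and only for extenders witnessing strongness. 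Note finally that commutativity of coiteration maps with an \emph{arbitrary} elementary embedding is a rigidity-type statement, so it cannot fall out of the soft facts you invoke; some genuinely new input is needed at this point.

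The paper supplies that input by a different device: it supposes the Fact fails, passes to a model $L[\vec E]$ that is $\phi$-minimal for $\phi = $ ``there is an extender for which the claim fails'' (no initial segment of the sequence yields a model of $\phi$), and then quotes Proposition 3.18 of Mitchell \cite{Mitchell1}, which applies in this minimal situation and yields the commuting iterations outright --- an immediate contradiction. The point of the reduction is that $\phi$-minimality is a first-order property of the model, hence transfers to $M_E = L[\vec F]$ by elementarity, putting both sides of the comparison under the hypotheses of the cited proposition; no Dodd--Jensen lemma is proved or needed. If you want to keep your direct route instead, the reverse direction must be argued with the actual Dodd--Jensen machinery (copying the comparison iteration and extracting an infinite descending sequence of ordinals from a failure of commutativity), not by declaring $i_1 \circ j_E$ to be an iteration map.
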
 
   
\begin{proof} A model $L[\vec E]$ is said to be {\em $\phi$-minimal} if $L[\vec E]$ is a model of $\phi$,
   but for no initial segment $\vec E'$ of $\vec E$ is $L[\vec E']$ a model of $\phi$.
 Suppose the claim fails, and let $L[\vec E]$ be $\phi$-minimal for the formula ``there is an extender $E$ such that the
   claim fails''. Appealing to  \cite[Proposition 3.18]{Mitchell1} we obtain an immediate contradiction.
\end{proof}

\begin{fact} \label{jensen} 
 Let $\mu$ be a cardinal with $\kappa^{++} \le \mu < o^{\vec E}(\kappa)$. Then for every $\nu$ with $\mu \le \nu < o^{\vec E}(\kappa)$,
   every bounded subset of $\mu$ appears   in $\Ult(V, E(\kappa, \nu))$.  
\end{fact}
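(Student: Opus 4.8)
The plan is to exploit coherence together with the fine-structural fact that bounded subsets of $\mu$ in $V = L[\vec E]$ appear at an early stage of the $L[\vec E]$-hierarchy. Fix $\nu$ with $\mu \le \nu < o^{\vec E}(\kappa)$, write $E = E(\kappa, \nu)$ and $j_E: V \longrightarrow M = \Ult(V, E)$, and recall that $E$ is a $(\kappa, \kappa + 1 + \nu)$-extender, so that its position on the sequence is governed by the index $\lambda = \kappa + 1 + \nu$. The crucial input is coherence, which in the form used for Fact \ref{3.19} gives $j_E(\vec E) \restriction (\kappa, \nu) = \vec E \restriction (\kappa, \nu)$; that is, the image sequence $j_E(\vec E)$ agrees with $\vec E$ on every extender indexed strictly below the position of $E$ itself.

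Now fix a bounded set $A \subseteq \gamma$ with $\gamma < \mu$. By Fact \ref{GCH} the GCH holds in $V$, and by the standard condensation (acceptability) analysis of $L[\vec E]$ every subset of $\gamma$ lying in $V$ already appears in $L_{\gamma^+}[\vec E]$. Since $\gamma < \mu$ and $\mu$ is a cardinal we have $\gamma^+ \le \mu < \lambda$, so $A$ is built using only the initial segment of $\vec E$ consisting of extenders indexed below $\gamma^+$, which is an initial segment of $\vec E \restriction (\kappa, \nu)$. The hypothesis $\mu \ge \kappa^{++}$ ensures that this remains true even for $\gamma$ as large as $\kappa^+$, where $\gamma^+ = \kappa^{++} \le \mu < \lambda$; below this threshold (for $\gamma \le \kappa$) the conclusion is immediate for any extender with critical point $\kappa$.

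By coherence this initial segment of $\vec E$ coincides with the corresponding initial segment of $j_E(\vec E)$, and the latter belongs to $M$. Running the same constructibility analysis inside $M$, which is legitimate because $M = L[j_E(\vec E)]$ and $M$ correctly computes the hierarchy $L_{\gamma^+}[j_E(\vec E)]$, we conclude that $A \in L_{\gamma^+}[j_E(\vec E)] \subseteq M$. As $A$ was an arbitrary bounded subset of $\mu$, this gives the claim for the chosen $\nu$, and hence for every $\nu$ in the stated range.

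The main obstacle is precisely the fine-structural step: verifying that the bounded subsets of $\mu$ really do appear by stage $\gamma^+ < \lambda$, so that their construction sees only the coherent, and hence $j_E$-invariant, part of the sequence, and that no extender indexed at or above $\lambda$ — for instance some $E(\kappa, \nu')$ with $\nu' \ge \nu$, which still has critical point $\kappa < \gamma$ — can contribute a new subset of $\gamma$. This is where acceptability and the non-overlapping hypothesis on $\vec E$ enter: acceptability forces a new subset of $\gamma$ to appear before stage $\gamma^+$, and non-overlapping prevents any later extender from acting low enough to interfere. The remaining points are routine once one is willing, as the authors are, to quote the standard theory of $L[\vec E]$ rather than redevelop it.
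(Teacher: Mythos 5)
There is a genuine gap, and it sits exactly where you flag "the main obstacle": the claim that every $A \subseteq \gamma$ in $V = L[\vec E]$ already appears in $L_{\gamma^+}[\vec E]$. You cite this as "standard condensation (acceptability) analysis," but in the framework the paper works in — Mitchell's non-fine-structural treatment of non-overlapping coherent sequences, with $L[\vec E]$ defined by plain relative constructibility and extenders indexed by pairs $(\kappa,\beta)$ — acceptability is \emph{not} an off-the-shelf fact. The only fact of this kind available (Fact \ref{GCH}, citing Mitchell's Theorem 3.24) is the cardinal-arithmetic statement $2^\gamma = \gamma^+$, which does not imply that subsets of $\gamma$ appear by level $\gamma^+$ of the hierarchy: unlike in $L$, condensation fails naively here, because the transitive collapse of a hull $X \prec L_\theta[\vec E]$ is a structure $L_{\bar\theta}[\vec F]$ whose sequence $\vec F$ agrees with $\vec E$ at critical points $\le \kappa$ (this is the induction via Fact \ref{3.19}) but typically carries "fake" partial extenders at larger critical points, so it is not an initial segment of $L[\vec E]$. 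Establishing where such collapses — and hence where subsets of $\gamma$ — live requires an iteration/comparison argument, and the natural output of that comparison is not "$A \in L_{\gamma^+}[\vec E]$" but rather "$A \in \Ult(V, E(\kappa,\beta))$ for some $\beta < \lambda^+$," which is precisely what the paper proves (hull with $P(\kappa) \subseteq X$, agreement of $\vec F$ with $\vec E$ at $\kappa$ via Fact \ref{3.19}, then comparison of the collapse against $\Ult(V, E(\kappa,\beta))$ with all critical points above $\lambda$, followed by a transfer to $\Ult(V, E(\kappa,\nu))$ using coherence). In other words, the statement you quote as a black box is, in this setting, of essentially the same depth as Fact \ref{jensen} itself; you have assumed the hard part. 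Your appeal to acceptability would be legitimate in a genuinely fine-structural framework (Mitchell--Steel premice), where it is built into the definition, but the paper explicitly avoids fine structure and core model theory, and the models here do not come equipped with it.

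There is also a secondary soft spot: even granting acceptability, the assertion that $L_{\gamma^+}[\vec E]$ "is built using only the initial segment of $\vec E$ consisting of extenders indexed below $\gamma^+$" depends on how $\vec E$ is amenably coded as a predicate. With the coding in which each tuple carries its index ordinal $(\kappa,\beta)$, levels below $\gamma^+$ indeed see nothing of extenders indexed at or above $\gamma^+$; but with a coding by extender fragments, the long extenders $E(\kappa,\nu')$, $\nu' \ge \nu$, do leave nonempty traces $E(\kappa,\nu') \cap L_\alpha[\vec E]$ in low levels, and one must argue these traces are themselves determined by the coherent part of the sequence. This is fixable, but it should be addressed rather than folded into "non-overlapping prevents interference." The coherence identity $j_E(\vec E) \restriction (\kappa,\nu) = \vec E \restriction (\kappa,\nu)$ and the trivial case $\gamma \le \kappa$ are handled correctly; the proof stands or falls on the acceptability claim, which in this paper's framework needs the hull-plus-comparison argument you were trying to avoid.
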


\begin{proof} We start by proving that if $\lambda > \kappa$ and  $o^{\vec E}(\kappa) \ge \lambda^+$, then every subset of $\lambda$
   appears in $\Ult(V, E(\kappa, \beta))$ for some $\beta$ with $\lambda < \beta < \lambda^+$. To see this, let $A \subseteq \lambda$
   and find some large regular $\theta$ such that $A \in L_\theta[\vec E]$. Let $X \prec L_\theta[\vec E]$ be such that
   $\vert X \vert = \lambda$, $P(\kappa) \subseteq X$, $A \in X$ and $X \cap \lambda^+ \in \lambda^+$. Let
   $M$ be the transitive collapse of $X$, so that $A \in M$ and $M = L_{\bar\theta}[\vec F]$ for some extender sequence
   $\vec F$.

   It is clear that $\vec E \restriction \kappa = \vec F \restriction \kappa$, and
   since $P(\kappa) \subseteq M$ a straightforward induction using Fact \ref{3.19} shows that for every
   $\zeta < o^{\vec F}(\kappa)$ we have $\zeta < o^{\vec E}(\kappa)$ and $E(\kappa, \zeta) = F(\kappa, \zeta)$.
   Let $\beta = o^{\vec F}(\kappa)$, and note that $\lambda < \beta < \lambda^+ \le o^{\vec E}(\kappa)$. 
   Compare the models $M$ and $\Ult(V, E(\kappa, \beta))$,  to obtain iterations 
   $i_0: M \longrightarrow N_0$ and $i_1 :  \Ult(V, E(\kappa, \beta)) \longrightarrow N_1$.  By coherence
   and the agreement between $\vec E$ and $\vec F$, together with the non-overlapping condition, we see that
   both $i_0$ and $i_1$ have critical points greater than $\lambda$.

   In the comparison it is not possible that $M$ out-iterates $\Ult(V, E(\kappa, \beta))$, for then
   $M$ would out-iterate $V$ and we could obtain a set of indiscernibles for $V$. It follows that $N_0 \subseteq N_1$.
   Since $A \in M$ and the critical points of $i_0, i_1$ are greater than $\lambda$ we see successively that
   $A \in N_0$, $A \in N_1$ and finally $A \in \Ult(V, E(\kappa, \beta))$.

   To finish the proof of Fact \ref{jensen} let $\kappa$, $\mu$, and $\nu$  be as above, and let $B$ be a bounded
   subset of $\mu$, so that without loss of generality $B \subseteq \lambda$ for some cardinal
   $\lambda < \mu$. It follows from what was proved above that $B \in \Ult(V, E(\kappa, \beta))$ where
   $\lambda < \beta < \lambda^+ \le \mu$. Now $E(\kappa, \beta) \in \Ult(V, E(\kappa, \nu))$
   and the models $V$ and $\Ult(V, E(\kappa, \nu))$ agree past $\kappa$, so that easily
   their ultrapowers by $E(\kappa, \beta)$ agree past the image of $\kappa$, and so
   $B \in \Ult(V, E(\kappa, \nu))$ as claimed. 
\end{proof}

   Using these results, we can characterise the $(\kappa, \mu)$-extenders which witness that
   $\kappa$ is $\mu$-strong. 

\begin{lemma} \label{mainlemma} 
   Let $V = L[\vec E]$  where $\vec E$ is a non-overlapping coherent sequence   of extenders. 
   Let $\kappa < \mu = \vert V_\mu \vert$. Then:  

\begin{enumerate} 

\item  For every $\bar \mu$ such that $\mu \le \bar\mu < o^{\vec E}(\kappa)$, 
   the extender $E(\kappa, \bar\mu) \restriction \mu$ witnesses that
   $\kappa$ is $\mu$-strong. 

\item  \label{mainbit}  If $E$ is a $(\kappa, \mu)$-extender witnessing that $\kappa$ is
     $\mu$-strong, then $E = E(\kappa, \bar\mu) \restriction \mu$ for
    some $\bar\mu$ such that $\mu \le \bar\mu < o^{\vec E}(\kappa)$.

\end{enumerate} 

\end{lemma}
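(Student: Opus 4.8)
For part~(1) I would reduce $\mu$-strongness to the containment of bounded subsets and then quote Fact~\ref{jensen}. Since $\mu=\vert V_\mu\vert$, the cardinal $\mu$ is a strong limit, so $\mu>2^\kappa$ and hence $\kappa^{++}\le\mu$; as also $\mu\le\bar\mu<o^{\vec E}(\kappa)$, Fact~\ref{jensen} (with $\nu=\bar\mu$) shows that every bounded subset of $\mu$ lies in $\Ult(V,E(\kappa,\bar\mu))$. A bounded subset of $\mu$ has rank below $\mu$, so by the remark in the introduction (valid because $\vert V_\mu\vert=\mu$ and the critical point is $\kappa$) this yields $V_\mu\subseteq\Ult(V,E(\kappa,\bar\mu))$. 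To pass to the restriction I would use the factor embedding
\[
 k\colon \Ult(V,E(\kappa,\bar\mu)\restriction\mu)\longrightarrow\Ult(V,E(\kappa,\bar\mu)),\qquad k\colon j_{E(\kappa,\bar\mu)\restriction\mu}(f)(a)\mapsto j_{E(\kappa,\bar\mu)}(f)(a)
\]
for $a\in[\mu]^{<\omega}$. As $E(\kappa,\bar\mu)\restriction\mu$ already contains every generator below $\mu$, one has $\crit(k)\ge\mu$, so the two ultrapowers have the same $V_\mu$; hence $V_\mu\subseteq\Ult(V,E(\kappa,\bar\mu)\restriction\mu)$, and since the critical point is $\kappa<\mu$ this extender witnesses that $\kappa$ is $\mu$-strong.

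For part~(2) let $j_E\colon V\to M_E=\Ult(V,E)$, so that $V_\mu\subseteq M_E$ gives $V_\mu^{M_E}=V_\mu$; as the extender sequence up to length $\mu$ is determined by $V_\mu$, this yields the crucial agreement $\vec E\restriction\mu=j_E(\vec E)\restriction\mu$. By Fact~\ref{3.18} I would compare $V$ with $M_E$, obtaining $i_0\colon V\to N$ and $i_1\colon M_E\to N$ with $i_1\circ j_E=i_0$. The plan is to show that the $(\kappa,\mu)$-extender derived from $i_0$ equals $E$ on the one hand and $E(\kappa,\bar\mu)\restriction\mu$ on the other, for a suitable $\bar\mu$. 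Since the sequences agree below $\kappa$ one has $\crit(i_1)\ge\kappa$, whence $\crit(i_0)=\crit(i_1\circ j_E)=\kappa$; thus the first extender applied on the $V$-side has critical point $\kappa$, so it is some $E(\kappa,\bar\mu)$ lying on $\vec E$, giving $\bar\mu<o^{\vec E}(\kappa)$, and I write $i_0=k\circ j_{E(\kappa,\bar\mu)}$. Because $\vec E\restriction\mu=j_E(\vec E)\restriction\mu$, the least disagreement resolved by this first step lies at an extender of length at least $\mu$, so $\kappa+1+\bar\mu\ge\mu$ and hence $\mu\le\bar\mu$.

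The main obstacle is to control all remaining critical points in the comparison, namely to show $\crit(k)\ge\mu$ and $\crit(i_1)\ge\mu$. Now that $E(\kappa,\bar\mu)$ is known to lie on $\vec E$ with length at least $\mu$, the non-overlapping condition forbids any extender of either sequence from having critical point in the interval $(\kappa,\mu)$; moreover the $\kappa$-extenders of $V$ and of $M_E$ agree below $\bar\mu$ and are exhausted there (by the induction in the proof of Fact~\ref{jensen}, since $o^{j_E(\vec E)}(\kappa)=\bar\mu$, using $P(\kappa)^{M_E}=P(\kappa)^V$ and Fact~\ref{3.19}), so no step after the first has critical point $\le\kappa$. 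With the critical points strictly increasing, it follows that every subsequent step of $i_0$ and every step of $i_1$ has critical point at least $\mu$. Carrying out this bookkeeping rigorously is the delicate part of the argument.

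Granting $\crit(k),\crit(i_1)\ge\mu$, the identification is a short computation: for $a\in[\mu]^{<\omega}$ and $X\subseteq[\kappa]^{\vert a\vert}$,
\[
 a\in j_E(X)\iff a\in i_1(j_E(X))=i_0(X)=k\bigl(j_{E(\kappa,\bar\mu)}(X)\bigr)\iff a\in j_{E(\kappa,\bar\mu)}(X),
\]
the outer biconditionals holding because $i_1$ and $k$ fix the finite set $a\subseteq\mu$ and restrict to the identity on subsets lying in $V_\mu$ below their critical points. This says exactly that $E=E(\kappa,\bar\mu)\restriction\mu$ with $\mu\le\bar\mu<o^{\vec E}(\kappa)$, as required.
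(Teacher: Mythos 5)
Your overall strategy is the paper's own: part (1) via Fact~\ref{jensen} plus the standard derived-extender argument, and part (2) via the Fact~\ref{3.18} comparison of $V$ with $M_E$, identification of the first extender applied on the $V$-side as some $E(\kappa,\bar\mu)$ with $\bar\mu\ge\mu$, the non-overlapping condition to push all remaining critical points up past $\mu$, and the same closing chain of biconditionals. (The paper wraps part (2) in a $\phi$-minimality contradiction, but minimality is never visibly used in the body of its argument, so your direct presentation is harmless.) Part (1) is fine at the paper's own level of rigor; note only that ``$\crit(k)\ge\mu$, so the two ultrapowers have the same $V_\mu$'' does not follow from $\crit(k)\ge\mu$ alone, but from the representation argument using $\vert V_\mu\vert=\mu$ (every bounded subset of $\mu$ lying in the long ultrapower has the form $j(f)(a)$ with $a\in[\mu]^{<\omega}$), which is the standard fact licensed in the introduction.

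The genuine gap is in part (2), at what you yourself call the ``crucial agreement'' $\vec E\restriction\mu=j_E(\vec E)\restriction\mu$, which you justify by saying that ``the extender sequence up to length $\mu$ is determined by $V_\mu$.'' That is not a citable fact: being on the sequence is not a property of an extender as an element of $V_\mu$, and this kind of determination of the sequence by the universe is exactly the maximality phenomenon that Fact~\ref{3.19} exists to establish. Everything downstream leans on this claim: it is what gives $\mu\le\bar\mu$, hence the bound on the later critical points, hence the final computation; and it is where the hypothesis $V_\mu\subseteq M_E$ (that $E$ witnesses $\mu$-strongness) does its real work. The paper proves the needed instance (agreement at critical point $\kappa$) by an induction using Fact~\ref{3.19} applied \emph{inside} $M_E$: for $\eta<\min\{\mu,o^{\vec E}(\kappa)\}$ the extender $E(\kappa,\eta)$ lies in $V_\mu\subseteq M_E$ and, given the agreement already secured at smaller coordinates, coheres with $\vec F=j_E(\vec E)$, so Fact~\ref{3.19} in $M_E$ yields $\eta<o^{\vec F}(\kappa)$ and $F(\kappa,\eta)=E(\kappa,\eta)$; this is what shows the $M_E$-sequence at $\kappa$ does not run out before $\mu$. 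You do run the mirror-image induction (in $V$, using $P(\kappa)^{M_E}=P(\kappa)^V$ and Fact~\ref{3.19}) to see that the $\kappa$-extenders of $M_E$ form an initial segment of those of $V$, but that direction alone is compatible with $o^{\vec F}(\kappa)<\mu$, in which case $\bar\mu<\mu$, the later critical points need not clear $\mu$, and your final biconditionals fail. So the missing step is the second, inside-$M_E$ induction; once it is added, your proof coincides with the paper's.
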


\begin{proof}   The first claim is straightforward. Since $\mu$ is a limit cardinal greater than
    $\kappa$, appealing to Fact  \ref{jensen} we see that  all bounded subsets
    of $\mu$ are in $\Ult(V, E(\kappa, \bar\mu))$, so that $V_\mu \subseteq \Ult(V, E(\kappa, \bar\mu))$.
    The extender $E(\kappa, \bar\mu) \restriction \mu$ is the $(\kappa, \mu)$-extender approximating
    the embedding $j_{E(\kappa, \bar\mu)}$, so it witnesses that $\kappa$ is $\mu$-strong.

    We prove the second claim by contradiction. If it fails, we may assume that $L[\vec E]$ is
    $\phi$-minimal where $\phi$ asserts ``the second claim fails''. Fix an extender $E$ witnessing that $\kappa$ is
    $\mu$-strong, and form $j_E: V \longrightarrow M_E = \Ult(V, E)$. Let $\vec F = j_E(\vec E)$, so
    that $\vec F$ is a coherent non-overlapping sequence of extenders in $M_E = L[\vec F]$. Now
    compare $V$ and $M_E$: by Fact \ref{3.18} above we get iterations $i_0: V \longrightarrow N$ and
    $i_1: M_E \longrightarrow N$ such that $i_1 \circ j_E = i_0$.

    Since $\crit(j_E) = \kappa$, we also have $\crit(i_0) = \kappa$, so that the first extender
    which is used in $i_0$ must be of the form $E(\kappa, \bar\mu)$ for some $\bar\mu < o^{\vec E}(\kappa)$.
    Since $\crit(j_E) = \kappa$, the extender sequences $\vec E$ and $\vec F$ agree up to $\kappa$.
    An easy induction using Fact \ref{3.19} shows that for every $\eta < o^{\vec F}(\kappa)$ we have
    $\eta < o^{\vec E}(\kappa)$ and $F(\kappa, \eta) = E(\kappa, \eta)$. 

    Since $V_\mu \subseteq M_E$, for every $\eta < \min \{ \mu, o^{\vec E}(\kappa) \}$ we have
    $E(\kappa, \eta) \in M_E$, so by another appeal to Fact \ref{3.19} we see that
    $\eta < o^{\vec F}(\kappa)$. Summarising, at the critical point $\kappa$ we have that
\begin{itemize}
\item  The sequence ${\vec F}(\kappa, -)$ is an initial segment of  ${\vec E}(\kappa, -)$. 
\item  The sequences ${\vec F}(\kappa, -)$ and ${\vec E}(\kappa, -)$ agree up to $\mu$.
\end{itemize} 
 In the comparison of $L[\vec E]$ and $L[\vec F]$ an extender with critical point $\kappa$ is applied
  at the first step in the iteration $i_0$ of $L[\vec E]$, so  the only possibility is that 
    $\mu \le \bar \mu  = o^{\vec F}(\kappa) < o^{\vec E}(\kappa)$. 
    
   In the first step of the comparison we applied $E(\kappa, \bar \mu)$ on the $V$-side and did nothing
   on the $M_E$-side, obtaining models which  have identical $\bar\mu$-sequences of extenders at
   critical point $\kappa$. Since we are using non-overlapping extender sequences and $\bar \mu \ge \mu$, it follows that
   in the remainder of the comparison all critical points are greater than $\mu$;
   that is to say 
    $\crit(i_1) > \mu$, and  all critical points in $i_0$ past the first step are
    greater than $\mu$. So now for any  $a \in [\mu]^{ < \omega} $ and $X \subseteq [\kappa]^{\vert a \vert}$,
we see that
\[
    a \in j_E(X) \iff a \in i_0(X) \iff a \in j_{E(\kappa, \bar\mu)}(X),
\]
    where the first equivalence holds because $\crit(i_1) > \mu$ and $i_1 \circ j_E = i_0$, and the
   second equivalence holds because the first step in $i_0$ is to apply $E(\kappa, \bar \mu)$ and
   all subsequent critical points in $i_0$ are greater than $\mu$.

   It follows that $E = E(\kappa, \bar \mu) \restriction \mu$. We have shown that the second claim holds in $L[\vec E]$,
   an immediate contradiction.
\end{proof} 
    
The following corollary is immediate:
\begin{corollary} \label{useful} Let $V = L[\vec E]$  where $\vec E$ is a non-overlapping coherent sequence   of extenders.
   Let $o^{\vec E}(\kappa) = \mu+1$ for some cardinal $\mu > \kappa$ with $\vert V_\mu \vert = \mu$. Then
   $E(\kappa, \mu)$ is the only $(\kappa, \mu)$-extender witnessing that $\kappa$ is $\mu$-strong.
\end{corollary}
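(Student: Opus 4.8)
The plan is to read the conclusion directly off the two parts of Lemma \ref{mainlemma}, since the hypothesis $o^{\vec E}(\kappa) = \mu + 1$ leaves exactly one admissible index. First I would observe that the values of $\bar\mu$ quantified over in the lemma, namely those with $\mu \le \bar\mu < o^{\vec E}(\kappa)$, collapse under this hypothesis to the single value $\bar\mu = \mu$, since $o^{\vec E}(\kappa) = \mu + 1$.

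For existence I would invoke part (1) of Lemma \ref{mainlemma} with $\bar\mu = \mu$, which tells us that $E(\kappa, \mu) \restriction \mu$ witnesses that $\kappa$ is $\mu$-strong. Here I would record the one bookkeeping point: under the paper's convention that $E(\kappa, \beta)$ is a total $(\kappa, \kappa + 1 + \beta)$-extender, we have $\kappa + 1 + \mu = \mu$ because $\mu > \kappa$ is a cardinal, so $E(\kappa, \mu)$ is already a $(\kappa, \mu)$-extender and $E(\kappa, \mu) \restriction \mu = E(\kappa, \mu)$.

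For uniqueness I would apply part (\ref{mainbit}) of Lemma \ref{mainlemma}: any $(\kappa, \mu)$-extender $E$ witnessing that $\kappa$ is $\mu$-strong must have the form $E = E(\kappa, \bar\mu) \restriction \mu$ for some $\bar\mu$ with $\mu \le \bar\mu < o^{\vec E}(\kappa)$. By the reduction in the first paragraph this forces $\bar\mu = \mu$, so $E = E(\kappa, \mu) \restriction \mu = E(\kappa, \mu)$. Combining the two directions shows that $E(\kappa, \mu)$ is the unique such extender.

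Since the whole argument is a direct citation of Lemma \ref{mainlemma}, there is no substantive obstacle; the only thing to watch is the index/restriction bookkeeping, namely confirming that $\bar\mu = \mu$ is the sole admissible index and that $E(\kappa, \mu) \restriction \mu$ genuinely coincides with $E(\kappa, \mu)$.
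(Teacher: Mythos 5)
Your proof is correct and is exactly the argument the paper intends: the paper states the corollary as an immediate consequence of Lemma \ref{mainlemma}, with the hypothesis $o^{\vec E}(\kappa) = \mu+1$ forcing $\bar\mu = \mu$ in both parts of the lemma. Your bookkeeping observation that $\kappa + 1 + \mu = \mu$ (so $E(\kappa,\mu)$ is itself a $(\kappa,\mu)$-extender and the restriction is vacuous) is the right detail to check and is handled correctly.
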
 

\begin{remark}
  The analysis in the proof of part \ref{mainbit} of Lemma \ref{mainlemma} can easily be pushed further, so
    show that the comparison terminates after one step on the $V$-side with
\[  
    N= \Ult(V, E) = \Ult(V, E(\kappa, \bar \mu)).
\] 
\end{remark}

   With Corollary \ref{useful} in hand, we can now prove the remaining results about Question \ref{quest}.

\begin{theorem} \label{badlewit}
 It is consistent that $\mu$ is singular, there is exactly one $(\kappa, \mu)$-extender $E$
     witnessing that $\kappa$ is $\mu$-strong, and $E$ is a bad witness.
\end{theorem}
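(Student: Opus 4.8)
The plan is to build the model in the form $V = L[\vec E]$ for a non-overlapping coherent sequence $\vec E$, reading off the uniqueness of the witness from Corollary~\ref{useful}. Thus it suffices to produce such a $V$ with a cardinal $\kappa$ and a singular cardinal $\mu > \kappa$ satisfying $\vert V_\mu\vert = \mu$ and $o^{\vec E}(\kappa) = \mu+1$, for which $E(\kappa,\mu)$ is a \emph{bad} witness, that is, $\mu$ is inaccessible in $\Ult(V, E(\kappa,\mu))$. Granting this, Corollary~\ref{useful} provides that $E(\kappa,\mu)\restriction\mu$ is the unique $(\kappa,\mu)$-extender witnessing that $\kappa$ is $\mu$-strong, and it is bad by construction, which is exactly the assertion of the theorem.

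To locate the right $\mu$ I would start, from the consistency of a cardinal that is $\nu$-strong for some inaccessible $\nu$, in the canonical $L[\vec E]$-model $W$ for this hypothesis, in which $\kappa$ is $\nu$-strong, $\nu$ is inaccessible, and $o^{\vec E}(\kappa) > \nu$, and run the machinery of Fact~\ref{bad} for the extender $E(\kappa,\nu)$: the factor maps $k_{\bar\mu}$, the function $F$, and the club $C$ of closure points $\mu$ with $\nu \in \rge(k_\mu)$. Taking $\mu \in C$ to be the supremum of an increasing $\omega$-sequence drawn from $C$ makes $\mu$ singular in $W$ while keeping $\mu \in C$, so that $\vert V_\mu\vert = \mu$ and $k_\mu(\mu) = \nu$; recall also that $\rge(k_\mu)\cap\nu = \mu$, so $k_\mu$ is the identity below $\mu$ and fixes $\kappa$. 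The key new observation is this: writing $j = j_{E(\kappa,\nu)\restriction\mu}$ and letting $\bar\mu = o^{j(\vec E)}(\kappa)$ be the canonical index attached to the witness $E(\kappa,\nu)\restriction\mu$ by the proof of Lemma~\ref{mainlemma}(\ref{mainbit}), the factor map $k_\mu$ carries $\bar\mu$ to $o^{j_{E(\kappa,\nu)}(\vec E)}(\kappa) = \nu$ by coherence, but also carries $\mu$ to $\nu$; injectivity of $k_\mu$ then forces $\bar\mu = \mu$. Hence $E(\kappa,\nu)\restriction\mu = E(\kappa,\mu)\restriction\mu$, and by the Remark following Corollary~\ref{useful} we get $\Ult(W, E(\kappa,\mu)) = \Ult(W, E(\kappa,\nu)\restriction\mu)$, a model in which $\mu$ is inaccessible. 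So already in $W$ the canonical extender $E(\kappa,\mu)$ is a bad witness.

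I would then pass to the initial segment $V = L[\vec E\restriction(\xi+1)]$ of $W$ whose last extender is $E(\kappa,\mu)$, where $\xi$ is the index of $E(\kappa,\mu)$, so that $o^{\vec E}(\kappa) = \mu+1$ in $V$ and $V$ agrees with $W$ below $\xi$. Two of the three demands are now routine. Since $\xi > \mu$, the models $V$ and $W$ have the same $V_\mu$, whence $\vert V_\mu\vert = \mu$ by Fact~\ref{GCH}. And any cofinal map from some $\delta < \mu$ into $\mu$ lying in an ultrapower by the $(\kappa,\mu)$-extender $E(\kappa,\mu)$ is represented by a function of rank below $\xi$, on which $V$ and $W$ agree; so $\Ult(V, E(\kappa,\mu))$ and $\Ult(W, E(\kappa,\mu))$ have the same short cofinal maps into $\mu$, and since both contain $V_\mu$ they agree that $\mu$ is inaccessible. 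Thus $E(\kappa,\mu)$ is still a bad witness in $V$, and by Corollary~\ref{useful} it is the only one.

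The main obstacle is the surviving point, that $\mu$ remains \emph{singular} in $V$. This is genuinely delicate, because the singularity of $\mu$ in $W$ is invisible to $\Ult(W, E(\kappa,\mu))$ --- where $\mu$ is inaccessible --- and so it is not clear a priori that it is retained in $V$. I would handle this by a fine-structural analysis of $V$ as the premouse whose last extender is $E(\kappa,\mu)$. Every level of $V$ below the index $\xi$ coincides with the corresponding level of $\Ult(W, E(\kappa,\mu))$ and hence carries no cofinal $\omega$-sequence in $\mu$; the crux is to show, using acceptability and the indexing conventions of \cite{Mitchell1}, that such a sequence appears exactly once the last extender $E(\kappa,\mu)$ is present, equivalently that the natural length $\mu$ of the last extender of $V$ has cofinality $\omega$ in $V$. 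Establishing this keeps $\mu$ singular in $V$ while leaving it inaccessible in $\Ult(V, E(\kappa,\mu))$, which finishes the construction.
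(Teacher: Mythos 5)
Your first half is essentially the paper's own argument, and it is correct: you start from $L[\vec E]$ with $\kappa$ $\nu$-strong for an inaccessible $\nu$, run the machinery of Fact~\ref{bad} to get a singular $\mu \in C$ with $k_\mu(\mu)=\nu$, and then make the key observation that $k_\mu$ sends $o^{j_{E(\kappa,\nu)\restriction\mu}(\vec E)}(\kappa)$ to $\nu$ while also sending $\mu$ to $\nu$, so that by injectivity and Fact~\ref{3.19} the bad witness $E(\kappa,\nu)\restriction\mu$ is literally the sequence extender $E(\kappa,\mu)$. This is exactly the ``novel point'' in the paper's proof. The divergence comes at the last step, and this is where your proposal has a genuine gap. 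The paper does \emph{not} truncate the sequence: it passes to $N=\Ult(V,E(\kappa,\mu+1))$, which by coherence and elementarity is again of the form $L[\vec F]$ for a non-overlapping coherent sequence with $o^{\vec F}(\kappa)=\mu+1$ and top extender $E(\kappa,\mu)$, so that Corollary~\ref{useful} applies inside it with no further checking of the framework's hypotheses.

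Your truncation $V=L[\vec E\restriction(\xi+1)]$ needs two things you do not establish: (minor) that a truncated sequence is still a coherent non-overlapping sequence from the point of view of its own $L$-model, so that Corollary~\ref{useful} is even applicable there; and (major, as you yourself flag) that $\mu$ remains singular in the truncated model. The second point is the decisive one, and your treatment of it is a statement of what must be proven rather than a proof: nothing in the paper's toolkit (Facts~\ref{GCH}--\ref{jensen}, Lemma~\ref{mainlemma}) yields it, the gestured appeal to ``acceptability and the indexing conventions'' is not a routine known fact, and the paper deliberately avoids exactly this kind of fine-structural analysis. Moreover there is a concrete reason for suspicion: the configuration ``$o^{\vec E}(\kappa)=\mu+1$ and $|V_\mu|=\mu$'' is perfectly compatible with $\mu$ being inaccessible --- that is precisely why the good-witness theorem in the paper must invoke minimality of $\kappa$ to force singularity. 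So singularity of $\mu$ in your $V$ has to come from somewhere, yet the truncation discards exactly the objects that witness it in $W$ (the long extender $E(\kappa,\nu)$ and the club $C$), retaining only $V_\mu$ together with $E(\kappa,\mu)$, whose own ultrapower thinks $\mu$ is inaccessible. (A small further slip: you say the representing functions have ``rank below $\xi$, on which $V$ and $W$ agree''; the two models agree only up to rank $\mu$, not $\xi$, though this does not hurt that particular step since the relevant functions have rank about $\kappa$.) As it stands the proposal is incomplete at its crux, and it is not clear the truncation route can be completed at all; the paper's device of taking the ultrapower by $E(\kappa,\mu+1)$ is what circumvents this obstacle.
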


\begin{proof} The proof is very similar to that of Fact \ref{bad}.
 Suppose that $V = L[\vec E]$ for a non-overlapping
   coherent extender sequence $\vec E$, 
   and $o^{\vec E}(\kappa) = \nu +1$ for some inaccessible cardinal $\nu > \kappa$. Let $E = E(\kappa, \nu)$.
   By the arguments in the proof of Fact \ref{bad} we can find a singular cardinal $\mu$ such that  $\kappa < \mu < \nu$,
   and $j_E$ factors as $k \circ j_{E \restriction \mu}$ where $\crit(k) = \mu$ and $k(\mu) = \nu$.
   As we already argued, if we let $F = E \restriction \mu$ then $F$ is a bad witness.

   The novel point is that
   since $k$ is elementary and  $o^{j_E(\vec E)}(\kappa) = \nu$, we have
   $o^{j_F(\vec E)}(\kappa) = \mu$, so that $E(\kappa, \mu) = F$ by an appeal to Fact \ref{3.19} above.
   Now we take the ultrapower of $V$ by $E(\kappa, \mu+1)$, and obtain (by coherence and Corollary \ref{useful}) a model $N$ in which
   $F$ is still a bad witness and $F$ is the unique $(\kappa, \mu)$-extender witnessing that
   $\kappa$ is $\mu$-strong.  
\end{proof} 

\begin{theorem} 
 It is consistent that $\mu$ is singular, there is exactly one $(\kappa, \mu)$-extender $E$
     witnessing that $\kappa$ is $\mu$-strong, and $E$ is a good witness.
\end{theorem}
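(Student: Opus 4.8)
The plan is to mirror the construction in Theorem \ref{badlewit}, but to arrange the singular cardinal to be a \emph{limit} of inaccessibles rather than a single inaccessible; this is exactly what forces the unique witness to be good instead of bad. Suppose $V = L[\vec E]$ for a non-overlapping coherent extender sequence $\vec E$, and suppose $\kappa < \delta$ where $\delta$ is the supremum of the first $\omega$ inaccessible cardinals above $\kappa$ and $o^{\vec E}(\kappa) \ge \delta + 2$. (As in Theorem \ref{badlewit}, a model of this form is available from suitable large cardinal hypotheses; here we need inaccessibles cofinal in $\delta$ above $\kappa$, which is guaranteed once $\kappa$ is sufficiently strong.) Note that $\delta$ is singular of cofinality $\omega$, and since each of the inaccessibles $\delta_n$ below $\delta$ is a strong limit we have $\vert V_\delta \vert = \delta$.

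First I would check that $E := E(\kappa, \delta)$ is a good witness in $V$. By Lemma \ref{mainlemma}(1) (together with Fact \ref{jensen}) the extender $E$ witnesses that $\kappa$ is $\delta$-strong, so $V_\delta \subseteq M := \Ult(V, E)$ and in fact $V_\delta^M = V_\delta$. The heart of the argument is that the \emph{singularity} of $\delta$ is absolute here: inaccessibility of an ordinal $\alpha < \delta$ is decided by $V_{\alpha+1} \subseteq V_\delta$, so the set of inaccessibles below $\delta$ is computed the same way in $V$ and in $M$. Since that set has order type $\omega$ and supremum $\delta$, its increasing enumeration is definable in $M$ and is an $\omega$-sequence cofinal in $\delta$; thus $\mathrm{cf}^M(\delta) = \omega$ and $\delta$ is singular in $M$. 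Hence $E$ is a good witness. This is precisely dual to Fact \ref{bad}: there a single inaccessible $\nu$ remains inaccessible in the ultrapower and yields a bad witness, whereas here a limit of inaccessibles remains singular in the ultrapower.

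It remains to cut the extender sequence down so that $E$ becomes the unique witness, exactly as in Theorem \ref{badlewit}. I would form $N = \Ult(V, E(\kappa, \delta+1))$. By coherence the $\kappa$-th column of the extender sequence of $N$ has order type $\delta+1$, so $o^{\vec E_N}(\kappa) = \delta+1$ and $E(\kappa,\delta)^N = E(\kappa,\delta)^V = E$; moreover $\crit(j_{E(\kappa,\delta+1)}) = \kappa$ and this ultrapower is $\delta$-strong, so $V_\delta^N = V_\delta$ and $\delta$ remains a singular strong limit in $N$. Applying Corollary \ref{useful} inside $N$, $E$ is the unique $(\kappa,\delta)$-extender witnessing that $\kappa$ is $\delta$-strong. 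Finally, rerunning the absoluteness argument of the previous paragraph inside $N$—using that $\Ult(N, E)$ is $\delta$-strong and hence contains $V_\delta$, and that the inaccessibles below $\delta$ are unchanged—shows that $\delta$ is singular in $\Ult(N, E)$, so $E$ is still a good witness in $N$. The model $N$ then witnesses the theorem.

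The main obstacle is the goodness computation in the second paragraph: one must be certain that the singularizing object survives passage to the ultrapower. Unlike an arbitrary cofinal sequence in $\delta$ (which $j_E$ typically pushes past $\delta$, so that bad witnesses can occur), the cofinal sequence of inaccessibles is definable from $V_\delta$ and is therefore absolute to any $\delta$-strong ultrapower; this is exactly what distinguishes the good case from the bad case and what makes the choice of $\delta$ as a limit of inaccessibles essential.
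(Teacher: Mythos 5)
Your proof is correct, but it takes a genuinely different route from the paper's. The paper assumes only that some pair $(\kappa,\mu)$ with $o^{\vec E}(\kappa)=\mu+1$ and $\mu=\vert V_\mu\vert$ exists, takes $\kappa$ \emph{minimal} with this property, and then gets everything by reflection: minimality forces $\mu$ to be singular (otherwise one reflects the configuration below $\kappa$ using $E(\kappa,\bar\mu)$ for a suitable $\bar\mu<\mu$), and minimality again forces the unique witness $E(\kappa,\mu)$ to be good (if $\mu$ were inaccessible in $\Ult(V,E(\kappa,\mu))$, then since $o^{j(\vec E)}(\kappa)=\mu$ there, elementarity would produce $\delta<\kappa$ with $o^{\vec E}(\delta)$ inaccessible, again contradicting minimality). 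You instead build the singularity of $\mu=\delta$ into its definition: as the supremum of the first $\omega$ inaccessibles above $\kappa$, its singularizing sequence is definable from $V_\delta$, and since inaccessibility of each $\alpha<\delta$ is decided by $V_{\alpha+1}$, every inner model containing $V_\delta$ --- in particular every $\delta$-strong ultrapower --- sees $\delta$ as singular. This yields a structural fact the paper's argument does not: for such $\delta$ there are \emph{no} bad witnesses at all, so goodness is automatic and only uniqueness needs work; that you obtain by the cut-down trick (ultrapower by $E(\kappa,\delta+1)$, coherence, Corollary \ref{useful}), which is exactly the device of Theorem \ref{badlewit}. The trade-off is in the hypotheses: the paper needs only the minimal configuration, with no inaccessibles above $\kappa$ required, while you need $\omega$ inaccessibles above $\kappa$ and $o^{\vec E}(\kappa)\ge\delta+2$ beyond their supremum.

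One sentence of yours is wrong as stated, though it does not sink the proof: inaccessibles above $\kappa$ are \emph{not} ``guaranteed once $\kappa$ is sufficiently strong.'' A strong cardinal can consistently be the largest inaccessible (truncate the universe at the least inaccessible above it, if any; strength survives the truncation). So your configuration must simply be assumed outright as the ``suitable large cardinal assumption'' --- which is legitimate, since it is consistent relative to standard hypotheses and can be realized in a model $L[\vec E]$ --- rather than derived from the strength of $\kappa$ alone.
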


\begin{proof}
 Suppose that $V = L[\vec E]$ for a non-overlapping   coherent extender sequence $\vec E$, 
 and that $\kappa$ is minimal such that there exists $\mu > \kappa$ with $o^{\vec E}(\kappa) = \mu +1$
  and $\mu = \vert V_\mu \vert$. We claim that $\mu$ is singular. Otherwise $\mu$ is inaccessible,
  so there is $\bar \mu < \mu$ such that $\bar \mu = V_{\bar \mu}$. Now if $E =  E(\kappa, \bar \mu)$
  then $E$ witnesses ``$\kappa$ is $\bar \mu$-strong'' and 
\[
   Ult(V, E) \models \mbox{``$\bar \mu = V_{\bar \mu}$ and $o^{j_E(\vec E)}(\kappa) = \bar \mu$''}, 
\]  
   so that by elementarity we obtain a contradiction to the minimal choice of $\kappa$. 

  By Corollary \ref{useful}, if we let $F =E(\kappa, \mu)$ then $F$ is the unique
 $(\kappa, \mu)$-extender witnessing that $\kappa$ is $\mu$-strong. 
  We claim that $F$ is a good witness. If not then $\mu$ is inaccessible in $\Ult(V, F)$, 
  but since $\mu = o^{j_F(\vec E)}(\kappa)$ this would imply by the elementarity of $j_F$  that there are many ordinals
  $\delta < \kappa$ such that $o^{\vec E}(\delta)$ is inaccessible, contradicting the minimal
  choice of $\kappa$. 
\end{proof}  

\section{Conclusion and open questions} 

   We have determined fairly tight upper and lower bounds in consistency strength for the existence 
   of a bad witness, and have produced models in which the unique witness is good or bad as we please. 
   The following questions are open:

\begin{enumerate}

\item  Determine the exact consistency strength of the existence of a bad witness. We note that (by a straightforward argument)
     if there is a bad witness there is a model of the form $L[\vec E]$ with a bad witness, so the question amounts to
     asking how long the sequence $\vec E$ must be before a bad witness appears. 

\item Is it consistent that $\mu$ is singular and there are exactly two $(\kappa, \mu)$-extenders witnessing
     ``$\kappa$ is $\mu$-strong'', of which one is good and the other is bad?

\end{enumerate} 

\section*{Acknowledgements} 

  The authors would like to thank Brent Cody for raising Question \ref{quest}, for some interesting discussions regarding
  this question, and for pointing out that Fact \ref{bad} (which we had also observed) was already known and had appeared
  in reference \cite{FrHon}.   
 An instance of Theorem \ref{goodwit} appears in Cody's thesis \cite{Cody}.


\begin{thebibliography}{10}

\bibitem{Cody} B.~Cody, Some results on large cardinals and the continuum function, PhD thesis, CUNY Graduate Center, 2012. 

\bibitem{FrHon} S-D.~Friedman and R.~Honzik, Easton's theorem and large cardinals, Annals of Pure and Applied Logic 154 (2008), 191--208.

\bibitem{Mitchell1} W.~J.~Mitchell, Beginning inner model theory, in Handbook of Set Theory (2010),  ed. Foreman and Kanamori , Springer-Verlag,  New York, 1449--1495.

\bibitem{Mitchell2} W.~J.~Mitchell, Sets constructible from sequences of ultrafilters, Journal of Symbolic Logic 39 (1974), 57--66.

\bibitem{Mitchell3} W.~J.~Mitchell, Hypermeasurable cardinals, in Logic Colloquium '78 (1979), ed. Boffa, van Dalen and McAloon, North Holland, Amsterdam, 303--316. 

\end{thebibliography}
\end{document}